\documentclass[12pt, a4paper, reqno, oneside]{amsart}
\usepackage{amssymb}  
\usepackage{amsmath} 
\usepackage{amsthm}  
\usepackage{amsaddr}
\usepackage{url}

\usepackage{cite, verbatim, longtable}

\textwidth=160mm \oddsidemargin 3mm
\textheight=230mm  \topmargin -7mm

\newtheorem{theorem}{Theorem}
\newtheorem*{problem}{Problem}

\newtheorem{prop}[theorem]{Proposition}

\newtheorem{lemma}{Lemma}
\numberwithin{lemma}{section}
\newtheorem{tab}{Table}

\numberwithin{equation}{section}

\newcommand{\cF}{\mathcal{F}}
\newcommand{\cP}{\mathcal{P}}
\newcommand{\cD}{\mathcal{D}}

\def\ov{\overline}

\begin{document}

\vspace{1cm}

\title[Factoring nonabelian finite groups into two subsets]{Factoring nonabelian finite groups into two subsets}

\author{Ravil Bildanov}
\address{Specialized Educational Scientific Center of Novosibirsk State University\\ Pirogova 11/1, Novosibirsk 630090, Russia\\
ravilbildanov@gmail.com}

\author{Vadim Goryachenko}
\address{Novosibirsk State University\\ Pirogova 1, Novosibirsk 630090, Russia\\
goryachenko-vadim@rambler.ru}

\author{Andrey V. Vasil'ev}
\address{Sobolev Institute of Mathematics\\ Koptyuga 4, Novosibirsk 630090, Russia\\
Novosibirsk State University\\ Pirogova 1, Novosibirsk 630090, Russia\\
vasand@math.nsc.ru}


\thanks{The third author was supported by the program of fundamental scientific researches of the Russian Federation, Project 0314-2019-0001.}

\begin{abstract}
A group $G$ is said to be factorized into subsets $A_1, A_2,$ $\ldots, A_s\subseteq G$ if every element $g$ in $G$ can be uniquely represented as $g=g_1g_2\ldots g_s$, where $g_i\in A_i$, $i=1,2,\ldots,s$. We consider the following conjecture:  for every finite group $G$ and every factorization $n=ab$ of its order, there is a factorization $G=AB$ with $|A|=a$ and $|B|=b$. We show that a minimal counterexample to this conjecture must be a nonabelian simple group and  prove the conjecture for every finite group the nonabelian composition factors of which have orders less than $10\,000$.

{\bf Keywords:}  factoring of groups into subsets, finite group, finite simple group, maximal subgroups.
 \end{abstract}

\maketitle

\section{Introduction}\label{intro}

A group $G$ is said to be {\em factorized into subsets} $A_1, A_2,\ldots, A_s\subseteq G$ if every $g\in G$ can be uniquely represented as  $g=g_1g_2\ldots g_s$, where $g_i\in A_i,i=1,2,\ldots,s$. If $G$ is finite, then the uniqueness condition is obviously equivalent to the condition $|G|=|A_1||A_2|\ldots|A_s|$.

A problem of factoring abelian groups into subsets arose in connection with a problem of tiling a space in geometry. At the end of the nineteenth century, Minkowski conjectured that a lattice cube tiling of $\mathbb{R}^n$ always contains a two cubes sharing an $(n-1)$-dimensional face \cite{Min}. Forty years later, Haj\'{o}s reformulated Minkowski's question as the problem of factoring finite abelian groups: if a finite abelian group is factoring into cyclic subsets, then one of them must be a subgroup (see \cite{Haj1}); then he solved it in \cite{Haj2}. Since then, the factorization theory of finite abelian groups became an attractive and flourishing field of research, and besides the geometry of tilings, found applications in the code theory, cryptography, graph theory, number theory, Fourier analysis, theory of complex Hadamard matrices and so on (see, e.g., \cite{Sz,SzS}).

Let us turn now to the factorization theory of arbitrary (not necessarily abelian) finite groups.
Apart from rather rare attempts to transfer some results about abelian groups\footnote{It is worth mentioning here the article \cite{Sands}, where the author transferred his positive partial solution of Fuch's problem for abelian groups to the case of nilpotent groups.}, the most attention was paid to the {\em minimal logarithmic signature problem} or, briefly, the {\em MLS-problem}, asking (up to the notation) whether every finite group can be factorized into subsets whose cardinalities are primes or four. This problem was formulated in \cite{GVS} and motivated by a search for most effective ways to define cryptosystems based on factorization of finite groups \cite{Mag}. It was reduced to nonabelian simple groups \cite{GVRS}, so now the progress towards a solution is based on the classification of finite simple groups (CFSG). In fact, the progress is substantial, see, e.g, \cite{GVRS, SSM}, but the complete solution seems far to be achieved.

As observed, the MLS-conjecture holds true for solvable groups. In particular, the alternating group $G=A_4$ obviously has a factorization $G=ABC$, where the ordered tuple $(|A|,|B|,|C|)$ equals $(2,2,3)$. However, as it was recently shown \cite{Ber}, the same group $G$ does not have any factorization $G=ABC$, where $(|A|,|B|,|C|)=(2,3,2)$ (see also \cite{MOB}). It follows that the answer to part~(a) of the following question by M.H.~Hooshmand is negative for $k\geq3$.

\begin{problem}[{see \cite[Problem~19.35]{KN}}]\label{mc}
Let $G$ be a finite group of order $n$.

\emph{a)} Is it true that for every factorization $n=a_1\cdots a_k$ there exist subsets $A_1,\ldots,A_k$ such that $|A_1|=a_1,\ldots, |A_k|=a_k$ and $G=A_1\cdots A_k$\emph{?}

\emph{b)} The same question for the case $k=2$.
\end{problem}

Thus, the remaining challenge is to answer part (b) of the problem. The present paper is a step towards this goal.

Let $\cF$ be the class of finite groups which can be factorized into two subsets for every two factors of their orders. Given a group $G$ of order $n$, let $\cD(G)$ stand for the set of divisors $a$ of $n$ such that $G$ can not be factorized into two subsets of orders $a$ and $n/a$. So $G\in\cF$ if and only if $\cD(G)=\varnothing$. Since for every finite group $G$ having factorization $G=AB$,  $$AB=G=G^{-1}=B^{-1}A^{-1},$$
it follows that $a\in\cD(G)$ if and only if $n/a\in\cD(G)$, in particular, $\cD(G)=\varnothing$ if $a\not\in\cD(G)$ for every $a\leq\sqrt{|G|}$.

The following two propositions are rather simple and the facts they reflect were discussed in one or another way (see~\cite{MOH,MOB}). We include their proofs (see Section~\ref{pth2}), because, as far as we aware, they have never been published.

\begin{prop}\label{mth}
Let $G$ be a finite group of the smallest order such that $G\not\in\cF$. Then the following hold:
\begin{enumerate}
\item $G$ is a nonabelian simple group;
\item if $a\in\cD(G)$, then neither $a$ nor $|G|/a$ divides the order of any proper subgroup of $G$.
\end{enumerate}
\end{prop}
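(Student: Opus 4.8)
The plan is to reduce both parts to two elementary factorization-building lemmas, using throughout the already-noted symmetry $a\in\cD(G)\iff n/a\in\cD(G)$ (with $n=|G|$), which lets me treat $a$ and $n/a$ interchangeably.

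The first tool is a subgroup lemma: if $H\le G$ is any subgroup with $H\in\cF$ and $a\mid|H|$, then $a\notin\cD(G)$. To see it, factor $H=AB_0$ with $|A|=a$ and $|B_0|=|H|/a$, take a right transversal $T$ of $H$ in $G$, and put $B=B_0T$; then $AB=A(B_0T)=(AB_0)T=HT=G$, and since $|A||B|=a\cdot(n/a)=n$, surjectivity already forces a genuine factorization. Part (ii) is immediate from this: in a minimal counterexample every proper subgroup lies in $\cF$, so $a\in\cD(G)$ rules out $a\mid|H|$ for every proper $H$, and the symmetry does the same for $n/a$.

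For part (i) the goal is to show that $\cF$ is closed under extensions, i.e.\ if $N\trianglelefteq G$ with $1<N<G$ and $N,G/N\in\cF$ then $G\in\cF$; minimality then forbids any proper nontrivial normal subgroup, so $G$ is simple, and a simple abelian group has prime order and lies in $\cF$, leaving $G$ nonabelian simple. Given $a\mid n=|N|\cdot|G/N|$, I would first split $a=a_1a_2$ with $a_1\mid|N|$ and $a_2\mid|G/N|$; this is possible prime by prime, taking $v_p(a_1)=\min\bigl(v_p(a),v_p(|N|)\bigr)$ so that the leftover exponent $v_p(a)-v_p(a_1)$ does not exceed $v_p(|G/N|)$, which holds because $a\mid n$. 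Then I factor $N=A_1B_1$ with $|A_1|=a_1$ and $G/N=\ov{A_2}\,\ov{B_2}$ with $|\ov{A_2}|=a_2$, pick lifts $\{u_i\}$ of $\ov{A_2}$ and $\{v_j\}$ of $\ov{B_2}$, and set $A=\{u_ix:x\in A_1\}$ and $B=\{yv_j:y\in B_1\}$.

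The point of this ordering is that the two factors of $N$ sit adjacent, so $AB=\bigcup_{i,j}u_i(A_1B_1)v_j=\bigcup_{i,j}u_iNv_j=\bigcup_{i,j}u_iv_jN$, and as $\ov{A_2}\,\ov{B_2}=G/N$ these cosets exhaust $G$; a count of $|A|=a$ and $|B|=n/a$ (using that $A_1,B_1\subseteq N$ while the lifts represent distinct cosets) again upgrades $AB=G$ to a factorization. I expect the one genuinely delicate point to be precisely this arrangement: the tempting choice $A=A_1\{u_i\}$, $B=\{v_j\}B_1$ drags a conjugate $(A_1)^{v_j}$ into the middle, where the factorization $N=A_1B_1$ is no longer usable, whereas keeping $A_1,B_1$ adjacent lets normality collapse $u_iNv_j$ to $u_iv_jN$ and avoids conjugation altogether.
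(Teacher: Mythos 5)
Your proposal is correct and follows essentially the same route as the paper: the extension-closure lemma with the lifts placed outside ($A=UA_1$, $B=B_1V$) so that $A_1B_1=N$ collapses in the middle, and the transversal lemma $G=HT=A(B_0T)$ for part (ii), with the surjectivity-plus-cardinality count upgrading $AB=G$ to a factorization in both cases. The only differences are expository (you spell out the prime-by-prime splitting of $a$ and the reason the opposite ordering fails, which the paper leaves implicit).
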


Let $\cP$ be the class of finite groups having a subgroup series
\begin{equation}\label{series}
1=G_0<G_1<\ldots<G_{t-1}<G_t=G
\end{equation}
such that $|G_i:G_{i-1}|$ is a prime for $i=1,2,\ldots,t$.

\begin{prop}\label{primes}
$\cP\subseteq\cF$. In particular, $\cF$ includes the class of finite solvable groups.
\end{prop}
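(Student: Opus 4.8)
The plan is to prove the inclusion $\cP\subseteq\cF$ by induction on the length $t$ of a prime-index subgroup series \eqref{series}, and then to deduce the solvable case from the fact that every finite solvable group lies in $\cP$. For the latter, I would recall that a finite solvable group has a composition series whose factors are cyclic of prime order; since the terms of a composition series are (subnormal, hence) subgroups and the successive indices are exactly those prime orders, such a series witnesses membership in $\cP$. The base case $t\le 1$ of the induction is immediate: then $|G|$ is $1$ or a prime, so the only divisors $a$ of $n=|G|$ are $1$ and $n$, and $G=AB$ with $A=\{1\}$, $B=G$ (or vice versa) shows $\cD(G)=\varnothing$.

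For the inductive step, set $N=G_{t-1}$, a subgroup of prime index $p=|G:N|$, and observe that the truncated series $1=G_0<\dots<G_{t-1}=N$ has length $t-1$, so $N\in\cF$ by the induction hypothesis. Fix a divisor $a$ of $n=p\,|N|$ and put $b=n/a$. Since $p$ is prime and divides $ab=n$, it divides $a$ or $b$; by the symmetry $a\in\cD(G)\iff b\in\cD(G)$ observed in the Introduction, I may assume $p\mid a$. Then $a'=a/p$ divides $|N|=n/p$, so the induction hypothesis provides a factorization $N=A'B'$ with $|A'|=a'$ and $|B'|=|N|/a'=b$. Choosing a left transversal $T$ of $N$ in $G$ (so $|T|=p$ and $G=TN$, which is a factorization because $|T|\,|N|=|G|$), I would set $A=TA'$ and $B=B'$.

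It then remains to check that $G=AB$ is a factorization of the prescribed sizes, and I expect this cardinality-and-uniqueness bookkeeping to be the only place requiring care. First, since $G=TN$ is a unique product and $A'\subseteq N$, the multiplication map $(t,x)\mapsto tx$ is injective on $T\times A'$, whence $|A|=|T|\,|A'|=p\,a'=a$, while trivially $|B|=b$. Second, by associativity of the complex product, $AB=(TA')B'=T(A'B')=TN=G$; as $|A|\,|B|=ab=n=|G|$, the equality $AB=G$ forces unique representation, so $a\notin\cD(G)$. Since $a$ was an arbitrary divisor, $\cD(G)=\varnothing$ and $G\in\cF$, completing the induction. (Without invoking the symmetry one would instead treat $p\mid b$ directly by taking a right transversal $T$ with $G=NT$ and setting $A=A'$, $B=B'T$, which is entirely parallel.)
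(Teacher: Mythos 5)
Your proof is correct and follows essentially the same route as the paper: induction on the length of the series, extending a factorization of the prime-index subgroup $N=G_{t-1}$ to all of $G$ by means of a transversal. The only cosmetic difference is that the paper packages the transversal construction as a separate statement (Lemma~\ref{second}, applied via the observation that $a$ or $n/a$ must divide $|N|$ when $|G:N|$ is prime), whereas you inline the equivalent argument by splitting the prime $p$ off from $a$ or $b$.
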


Proposition~\ref{mth} implies that we can concentrate on nonabelian simple groups. So let us consider them starting from the smallest ones (here and further we use information from \cite{Atlas} and~\cite{GAP}). It is easily seen that the groups $A_5$ and $PSL_2(7)$ of orders $60$ and $168$ respectively contain solvable subgroups of prime indices, so they belong to the class $\cF$ by Proposition~\ref{primes}. The third smallest simple group $A_6$ of order $360=2^3\cdot3^2\cdot5$ contains no subgroups of prime index, so does not belong to $\cP$. However, it includes subgroups (obviously from $\cF$) of indices $6$, $10$, and $15$, therefore, for every divisor $a$ of $n=360$, either $a$ or $n/a$ divides the order of one of these subgroups. Thus, $A_6\in\cF$ in virtue of Proposition~\ref{mth}(ii).

Here we come to the simple group $G=SL_2(8)$ of all $2\times2$ matrices having the determinant~$1$ over the field of order $8$. This group has order $504$ and contains no subgroups of orders divisible by $12$ or $21$, so one cannot apply Proposition~\ref{mth}(ii) for two possible factorizations:
$$
504=21\cdot24\quad\mbox{and}\quad504=12\cdot42.
$$
It is also easily seen that a naive search does not work, because the number of possible factorizations of $G$ is huge. Therefore, in order to verify whether the suitable subsets exist, one need to apply more subtle arguments. For $G=SL_2(8)$, desired factorizations $G=AB$ exist, as Peter Mueller shown using computer calculations (see \cite{MOH}). His idea was to choose a first set $A$ as a product of two subgroups with $AA^{-1}$ as small as possible, and then find $B$ by solving the exact cover problem (see, e.g., \cite{Knuth}). We find such factorizations using a different approach. We choose `good' subgroups $H\subseteq A$, $K\subseteq B$ and use the well-known fact that $G$ is a disjoint union of the double cosets $HxK$. This significantly reduced the number of calculations required. Moreover, by the same method, we prove that fifteen smallest nonabelian simple groups  belong to $\cF$ (see Section~\ref{ssg}). Thus, the following assertion holds.

\begin{theorem}\label{thssg}
Suppose that the order of every nonabelian composition factor of a~finite group $G$ does not exceed $10\,000$. Then $G\in\cF$.
\end{theorem}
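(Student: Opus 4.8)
The plan is to reduce the statement to a finite verification for small nonabelian simple groups and then dispatch those groups by the tools already prepared. First I would isolate the extension principle underlying Proposition~\ref{mth}(i): if $N\trianglelefteq G$ with $N\in\cF$ and $G/N\in\cF$, then $G\in\cF$. Concretely, given a divisor $a$ of $n=|G|$, put $m=|N|$ and $k=|G/N|$; since divisibility is determined prime by prime and $v_p(a)\le v_p(m)+v_p(k)$ for every prime $p$, one can split $a=a_1a'$ with $a_1\mid m$ and $a'\mid k$. Factor $G/N=\overline A\,\overline B$ with $|\overline A|=a'$, lift $\overline A,\overline B$ to representative sets $A_0,B_0$ (so $A_0B_0$ is a transversal of $N$ in $G$), factor $N=A_1B_1$ with $|A_1|=a_1$, and set $A=A_0A_1$, $B=B_1B_0$. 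Then $|A|=a'a_1=a$ because the elements of $A_0$ lie in distinct cosets of $N$ while $A_1\subseteq N$, and similarly $|B|=n/a$; normality gives $AB=A_0A_1B_1B_0=A_0NB_0=A_0B_0N=G$, so $|A||B|=n$ forces the representation to be unique.

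Granting this, I would argue by induction on $|G|$. Let $G$ satisfy the hypothesis and suppose, for contradiction, that $G\notin\cF$ has least order among such groups. Every proper nontrivial $N\trianglelefteq G$ and the quotient $G/N$ inherit the composition-factor bound, since their nonabelian composition factors are among those of $G$; hence both lie in $\cF$ by minimality, and the extension principle yields $G\in\cF$, a contradiction. Thus $G$ is simple. If $G$ were abelian it would be cyclic of prime order, so $G\in\cP\subseteq\cF$ by Proposition~\ref{primes}, again a contradiction. Therefore $G$ is a nonabelian simple group, and by hypothesis $|G|\le 10\,000$. Consequently the theorem follows once every nonabelian simple group of order at most $10\,000$ is shown to belong to $\cF$.

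The list of such groups is finite (running from $A_5$ up to $PSL_2(27)$, as recorded in Section~\ref{ssg}), and I would handle them in three tiers. Groups possessing a subgroup of prime index that lies in $\cP$, for instance $A_5$ and $PSL_2(7)$, are immediately in $\cF$ by Proposition~\ref{primes}. For groups in which, for every divisor $a$ of $n$, either $a$ or $n/a$ divides the order of some proper subgroup (e.g.\ $A_6$), Proposition~\ref{mth}(ii) gives $\cD(G)=\varnothing$. The remaining, genuinely hard factorizations are precisely those not reached by Proposition~\ref{mth}(ii): for example $504=21\cdot 24=12\cdot 42$ in $PSL_2(8)$, where no proper subgroup has order divisible by $12$ or $21$.

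For these I would use the double-coset method announced in the introduction. Fix subgroups $H,K\le G$ and seek $A\supseteq H$, $B\supseteq K$ so that $A$ is a union of right cosets $Hx$ and $B$ a union of left cosets $yK$; then $AB=\bigcup HxyK$ is a union of $(H,K)$-double cosets, and since $G=\bigsqcup_i Hg_iK$ the requirement $G=AB$ with $|A|=a$ and $|B|=b$ becomes an exact-cover problem \cite{Knuth} over the small set of double cosets rather than over all $n$ elements. Choosing $H,K$ so that the overlap pattern of the double cosets is as tame as possible keeps the search feasible, and a computation in \cite{GAP} then exhibits explicit $A,B$ for each uncovered factorization. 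The step I expect to be the main obstacle is exactly this last tier: certifying, group by group and factorization by factorization, that the double-coset covering admits a solution, which is where essentially all of the genuine work of Section~\ref{ssg} resides.
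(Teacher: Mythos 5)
Your proposal is correct and follows essentially the same route as the paper: the extension lemma (the paper's Lemma~\ref{simple}) reduces the statement to nonabelian simple groups of order at most $10\,000$, which are then dispatched in the same three tiers --- Proposition~\ref{primes}, Proposition~\ref{mth}(ii), and a double-coset/exact-cover computation in GAP for the residual factorizations. The only detail worth making explicit is that the paper chooses $H$ and $K$ of \emph{coprime} orders, so that every double coset has size exactly $|H||K|$; this is the precise form of your ``tame overlap pattern'' and is what makes the covering condition equivalent to an exact cover.
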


\section{Proof of Propositions~\ref{mth} and~\ref{primes}}\label{pth2}

Part (i) of Proposition~\ref{mth} is an immediate consequence of the following lemma.

\begin{lemma}\label{simple}
Let $N$ be a normal subgroup of a finite group $G$. If $N,G/N\in\cF$, then $G\in\cF$.
\end{lemma}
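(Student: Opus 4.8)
The plan is to show that any admissible factorization of $|G|$ can be assembled from factorizations of $|N|$ and of $|G/N|$, which exist by hypothesis. Write $m=|N|$ and $k=|G/N|$, so that $|G|=mk$. The first step is purely arithmetic: given a divisor $a$ of $|G|$, I want to split it as $a=a_1a_2$ with $a_1\mid m$ and $a_2\mid k$. Comparing $p$-adic valuations prime by prime, if $v_p(m)=\alpha$, $v_p(k)=\beta$ and $v_p(a)=\gamma$, then $\gamma\le\alpha+\beta$, so setting $v_p(a_1)=\min(\gamma,\alpha)$ and $v_p(a_2)=\gamma-\min(\gamma,\alpha)=\max(\gamma-\alpha,0)$ yields $v_p(a_1)\le\alpha$ and $v_p(a_2)\le\beta$; taking products over all primes produces the desired $a_1\mid m$ and $a_2\mid k$ with $a_1a_2=a$. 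Setting $b=|G|/a$, $b_1=m/a_1$ and $b_2=k/a_2$, we likewise get $b=b_1b_2$ with $b_1\mid m$, $b_2\mid k$.

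Next I would invoke the hypotheses. Since $N\in\cF$ and $a_1b_1=m=|N|$, there is a factorization $N=CD$ with $|C|=a_1$ and $|D|=b_1$. Since $G/N\in\cF$ and $a_2b_2=k=|G/N|$, there is a factorization $G/N=\overline{E}\,\overline{F}$ with $|\overline{E}|=a_2$ and $|\overline{F}|=b_2$. Let $\pi\colon G\to G/N$ be the canonical epimorphism, and choose subsets $E,F\subseteq G$ mapped bijectively by $\pi$ onto $\overline{E}$ and $\overline{F}$ respectively (one coset representative per element). I then set $A=EC$ and $B=DF$. Because $C,D\subseteq N=\ker\pi$, the multiplication maps $E\times C\to G$ and $D\times F\to G$ are injective (apply $\pi$ to recover the $E$- or $F$-coordinate, then cancel), so $|A|=a_1a_2=a$ and $|B|=b_1b_2=b$, and in particular $|A|\,|B|=|G|$.

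It remains to check that $G=AB$ is a genuine factorization, i.e.\ that multiplication $A\times B\to G$ is a bijection. Since $|A|\,|B|=|G|$, by the cardinality criterion noted in the introduction it suffices to prove surjectivity. Here $AB=ECDF=E\,N\,F$, using $CD=N$. For fixed $e\in E$, $f\in F$ the set $eNf$ is contained in the fiber $\pi^{-1}(\overline{e}\,\overline{f})$ and has cardinality $|N|=m$, hence equals that whole fiber; and as $(e,f)$ runs over $E\times F$ the products $\overline{e}\,\overline{f}$ run over all of $G/N$ exactly once, because $G/N=\overline{E}\,\overline{F}$ is a factorization. Thus the fibers $eNf$ partition $G$, giving $AB=G$. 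As $a$ was an arbitrary divisor of $|G|$, this shows $G\in\cF$. The only genuinely delicate point is the bookkeeping in this last step: one must use that $\overline{E}\,\overline{F}$ is a factorization (not merely a covering) of $G/N$ to guarantee that the fibers $eNf$ are pairwise distinct and jointly exhaust $G$, which is exactly what upgrades $AB=G$ to a unique factorization once the cardinalities are matched.
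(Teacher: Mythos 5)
Your proof is correct and follows essentially the same route as the paper's: split the divisor $a$ into a part dividing $|N|$ and a part dividing $|G/N|$, factor each of $N$ and $G/N$ accordingly, lift the quotient factorization, and form $A=EC$, $B=DF$ (the paper's $A=UA_0$, $B=B_0V$). Your only addition is spelling out the $p$-adic valuation argument for the arithmetic splitting, which the paper dismisses as "definitely always possible."
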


\begin{proof} Put $\ov{G}=G/N$ and set $|N|=k$, $|\ov{G}|=\ov{k}$. Take any positive integers $a_0,b_0,\ov{a},\ov{b}$ such that $a=a_0\ov{a}$, $b=b_0\ov{b}$ and $a_0b_0=k$, $\ov{a}\ov{b}=\ov{k}$, which is definitely always possible. By hypothesis, there are factorizations into subsets of $N$ and $\ov G$: $$N=A_0B_0\qquad\mbox{and}\qquad\ov{G}=\ov{A}\,\ov{B},$$ where $|A_0|=a_0$, $|B_0|=b_0$ and $|\ov{A}|=\ov{a}$, $|\ov{B}|=\ov{b}$.

Fix subsets $U$ and $V$ of $G$ such that $\ov{A}=\{Nu : u\in U\}$, $|U|=\ov{a}$ and $\ov{B}=\{Nv : v\in V\}$, $|V|=\ov{b}$. Then each coset $Nx$ of $\ov{G}$ can be  represented as $Nx=NuNv$, where $u\in U$ and $v\in V$. This yields

$$
G=\bigcup_{x\in G}Nx=\bigcup_{u\in U, v\in V}NuNv=\bigcup_{u\in U, v\in V}uNv=
$$
$$=\bigcup_{u\in U, v\in V}u(A_0B_0)v=\bigcup_{u\in U, v\in V}(uA_0)(B_0v).$$

Therefore, each $g\in G$ can be represented as $g=uyzv$, where $y\in A_0$, $z\in B_0$ and $u\in U$, $v\in V$. On the other hand, as readily seen, $|XY|\leq|X||Y|$ for all $X,Y\subseteq G$, so the equality $n=\ov{a}a_0b_0\ov{b}$ implies that $|UA_0|=\ov{a}a_0=a$ and $|B_0V|=b_0\ov{b}=b$. It follows that $G$ is factorized into the subsets $A=UA_0$ and $B=B_0V$ of sizes $a$ and $b$ respectively, as required.
\end{proof}

\begin{lemma}\label{second} Let a subgroup $H$ of a finite group $G$ belong to $\cF$. If $a\in\cD(G)$, then $a$ does not divide $|H|$.
\end{lemma}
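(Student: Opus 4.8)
The plan is to establish the contrapositive: assuming that $a$ divides $|H|$, I would show that $a\notin\cD(G)$, that is, that $G$ admits a factorization $G=AB$ with $|A|=a$ and $|B|=n/a$, where $n=|G|$.

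First, since $H\in\cF$ and $a$ is a divisor of $|H|$, I would invoke the defining property of $\cF$ to factorize $H$ into two subsets $H=A_0B_0$ with $|A_0|=a$ and $|B_0|=|H|/a$. Next, I would decompose $G$ into the disjoint union of right cosets of $H$: fixing a set $T$ of right coset representatives with $|T|=|G:H|=n/|H|$, we have $G=\bigcup_{x\in T}Hx=\bigcup_{x\in T}A_0B_0x$. Setting $A=A_0$ and $B=B_0T=\bigcup_{x\in T}B_0x$, this immediately yields $AB=A_0B_0T=HT=G$.

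It then remains to check the cardinalities. Clearly $|A|=a$. For $B$, the key observation is that the translates $B_0x$ with $x\in T$ are pairwise disjoint: each is contained in the corresponding coset $Hx$ (since $B_0\subseteq H$), and distinct cosets of $H$ meet trivially. Hence $|B|=|T|\cdot|B_0|=(n/|H|)\cdot(|H|/a)=n/a$, so that $|A||B|=n=|G|$. Using the remark from the introduction that for a finite group the equality $|G|=|A||B|$ together with $G=AB$ already forces a genuine factorization, I would conclude $a\notin\cD(G)$, which establishes the lemma.

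This argument is essentially a streamlined version of the coset bookkeeping already carried out in the proof of Lemma~\ref{simple}, so I do not anticipate any serious obstacle. The only point that genuinely requires care is the disjointness of the translates $B_0x$, since it is precisely this that guarantees $|B|$ equals $n/a$ exactly, rather than being merely bounded above by it through the general inequality $|XY|\leq|X||Y|$.
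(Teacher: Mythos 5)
Your proposal is correct and follows essentially the same route as the paper's proof: factorize $H=A_0B_0$ using $H\in\cF$, take a right transversal $T$, and set $A=A_0$, $B=B_0T$ (the paper writes this as $H=AC$, $B=CT$, phrased as a contradiction rather than a contrapositive). Your explicit verification that the translates $B_0x$ are pairwise disjoint is a slightly more careful justification of the cardinality count that the paper leaves to its general remark on $|XY|\leq|X||Y|$.
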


\begin{proof} Set $|G|=n$, $b=n/a$, and $|H|=k$. Suppose to the contrary that $a$ divides~$k$, and put $c=k/a$. If $T$ is a right transversal of $H$ in $G$, then $|T|=|G:H|=b/c$. By hypothesis, there are subsets $A$ and $C$ of $H$ such that $H=AC$ and $|A|=a$, $|C|=c$. Put $B=CT$. Then
$$
G=HT=(AC)T=A(CT)=AB,
$$
and $|A|=a$, $|B|=b$, a contradiction.
\end{proof}

If $G$ is a group of the smallest order such $G\not\in\cF$, then every its proper subgroup $H$ belongs to $\cF$. Therefore, $a\in\cD(G)$ cannot divide $|H|$ by Lemma~\ref{second}. As was observed in the introduction, $a\in\cD(G)$ if and only if $|G|/a\in\cD(G)$, so $|G|/a$ cannot divide $|H|$ too. This proves part (ii) of Proposition~\ref{mth}.

Suppose now that $G$ belongs to $\cP$. Then it has a series of subgroups~(\ref{series}), where $|G_{i}:G_{i-1}|$ is a prime for $i=1,\ldots,t$. By induction on the length $t$ of the series, we may assume that $H=G_{t-1}\in\cF$. Since $|G:H|$ is a prime, $|H|$ is a multiple of $a$ or $|G|/a$ for every divisor $a$ of $|G|$. Lemma~\ref{second} yields $G\in\cF$, thus proving Proposition~\ref{primes}.

\medskip

Let us finish the preliminaries with the following useful (and straightforward) observation (see, e.g., \cite[Lemma~2]{Ber}). A group $G$ can be factorized into subsets of sizes $a$ and $b$ if and only if it has a factorization $G=AB$ with $|A|=a$, $|B|=b$, and $1\in A\cap B$. The latter factorization is called {\em normalized}.

\section{Small Simple Groups}\label{ssg}

In this section we deal with small nonabelian simple groups. The main source for us is {\em Atlas of Finite Groups} \cite{Atlas}, so we prefer further to use the single-letter notation for simple groups of Lie type, following Artin's convention \cite[Section~2 of~Preface]{Atlas}. For example, $L_n(q)$ means $PSL_n(q)$ and so on. For brevity, we also agree to say `simple' instead of `nonabelian simple'.

As explained in the introduction, $L_2(8)$ is the smallest simple group for which the question whether it belongs to $\cF$ is nontrivial. As we know \cite[p.~6]{Atlas}, the group $L_2(8)$ has order $n=504=2^3\cdot3^2\cdot7$ and contains, up to conjugation, three maximal subgroups $M_i,i=1,2,3,$ of orders $m_1=56=2^3\cdot7$, $m_2=18=2\cdot3^2$, $m_3=14=2\cdot7$, respectively. Due to Proposition~\ref{mth}, it suffices to show that $12,21\not\in\cD(L_2(8))$ (other possible numbers divide the orders of maximal subgroups).  However, there is no clear inductive way to exclude $12$ and $21$ from $\cD(L_2(8))$. So we make a little trick to get them in this case.

\medskip

Recall that given subgroups $H,K$ and an element $x$ of a group $G$, the set $HxK=\{hxk : h\in H, k\in K\}$ is called a {\em double coset} of subgroups $H$ and $K$ (in $G$) with representative~$x$. Distinct double cosets by the same subgroups are disjoint, so they partition the set $G$.

Suppose that the orders of subgroups $H$ and $K$ of a finite group $G$ are coprime. Then for every $x\in G$,
$$
|HxK|=|x^{-1}HxK|=\frac{|H^x||K|}{|H^x\cap K|}=|H||K|.
$$
Moreover, since
$$
G=\bigcup_{x\in T}HxK,
$$
it follows that $|G|=|H||T||K|$ for every full set of representatives $T$ of double cosets of $H$ and~$K$ in~$G$.

\medskip

Let now $n=ab$ be a factorization of the order of a group $G$. Assume that we have subgroups $H$ and $K$ of $G$ such that they are of coprime orders and $a=|H|a_0$, $b=|K|b_0$. Then in order to obtain a required factorization of $G$, it suffices to find subsets $A_0$ and $B_0$ of $G$ with the following properties $(*)$:
\begin{enumerate}
\item $A_0$ is a subset of a set of representatives of the right cosets of $H$ in $G$, and $|A_0|=a_0$;
\item $B_0$ is a subset of a set of representatives of the left cosets of $K$ in $G$, and $|B_0|=b_0$;
\item $T=A_0B_0$ is a full set of representatives of the double cosets of $H$ and $K$ in $G$.
\end{enumerate}

Indeed,
$$
G=\bigcup_{x\in T}HxK=\bigcup_{y\in A_0,z\in B_0}(Hy)(zK),
$$
and the same easy counting argument as in the proof of Lemma~\ref{simple} implies that $G=AB$ with $A=HA_0$, $B=B_0K$, and $|A|=a$, $|B|=b$.

\medskip

We are ready to return to small simple groups.

Let $G=L_2(8)$. First, set $a=21$, $b=24$. Take $H$ and $K$ to be a Sylow $7$-subgroup and Sylow $2$-subgroup of $G$ respectively, and fix a right transversal $S$ of $H$ and left transversal $Q$ of $K$. Then $a=3|H|$, $b=3|K|$ and $|S|=72$, $|Q|=63$. So in order to verify whether there exist subsets $A_0$ and $B_0$ satisfying $(*)$, one need to run over 3-element subsets of $S$ and 3-element subsets of~$Q$. Moreover, since we may suppose that $1=A_0\cap B_0$, it suffices to run over just 2-element subsets of $S$ and~$Q$. We made it applying GAP \cite{GAP}, and readily obtained suitable subsets $A_0$ and $B_0$ and, consequently, subsets $A$ and $B$ of sizes $21$ and $24$ that factorized~$G$ (the supplementary GAP-files are available at~\cite{Code} and~\cite{Logs}). If $a=12$ and $b=42$, then one can take $H$ to be a subgroup of order $6$ and $K$ to be a Sylow $7$-subgroup of~$G$. Similar calculations again give required subsets $A_0$, $B_0$, and hence $A$, $B$ (see~\cite{Logs}).

In fact, this approach allows to verify that every finite simple group $G$ of order less than $10\,000$ belongs to~$\cF$, thus proving Theorem~\ref{thssg}. Table~\ref{tabssg} below represents shortly our arguments. In this table, $G$ is one of the simple groups of order $n\leq10\,000$; the numbers $m_i$ are the orders of the maximal subgroups of $G$ (if there is a subgroup of prime index, then only its order is represented, cf. Proposition~\ref{primes}); the integers $a$ are possible elements of $\cD(G)$ (modulo Proposition~\ref{mth}(ii)) not exceeding $\sqrt{n}$; the numbers $h$ and $k$ are the coprime orders of subgroups $H$ and $K$ of $G$ respectively, with $h$ dividing $a$ and $k$ dividing $n/a$, such that $A=HA_0$ and $B=B_0K$ give a~factorization $G=AB$ (for details, see \cite{Logs}).

\medskip

\begin{tab}\label{tabssg}{\bfseries Factorizations of Simple Groups of Order Less Than 10\,000}
\medskip

{\small    \noindent\begin{tabular}{|l|l|l|c|c|}
  \hline
  $G$ & $n$ & $m_i$ & $a$ & $h,k$ \\
  \hline
$A_5$ & $60=2^2\cdot3\cdot5$ & $2^2\cdot3,\,\ldots$  & --- {\em (Prop.~\ref{primes})} & --- \\ \hline
$L_2(7)$ & $168=2^3\cdot3\cdot7$ & $2^3\cdot3,\,\ldots$ & --- {\em (Prop.~\ref{primes})} & --- \\ \hline
$A_6$ & $360=2^3\cdot3^2\cdot5$ & $2^2\cdot3\cdot5,\, 2^2\cdot3^2,\, 2^3\cdot3$ & --- {\em (Prop.~\ref{mth})} & --- \\ \hline
$L_2(8)$ & $504=2^3\cdot3^2\cdot7$ & $2^3\cdot7,\, 2\cdot3^2,\, 2\cdot7$ & $12=2^2\cdot3$ & $2\cdot3,\,7$ \\
 &  &  & $21=3\cdot7$ & $7,\,2^3$ \\ \hline
$L_2(11)$ & $660=2^2\cdot3\cdot5\cdot11$ & $2^2\cdot3\cdot5,\,\ldots$ & --- {\em (Prop.~\ref{primes})} & --- \\ \hline
$L_2(13)$ & $1\,092=2^2\cdot3\cdot7\cdot13$ & $2\cdot3\cdot13,\, 2\cdot7,\, 2^2\cdot3$ & $21=3\cdot7$ & $7,\,2\cdot13$ \\ \hline
$L_2(17)$ & $2\,448=2^4\cdot3^2\cdot17$ & $2^3\cdot17,\, 2^3\cdot3,\, 2\cdot3^2,\, 2^4$ & $48=2^4\cdot3$ & $2^4,\,17$ \\ \hline
$L_2(19)$ & $3\,420=2^2\cdot3^2\cdot5\cdot19$ & $3^2\cdot19,\, 2^2\cdot3\cdot5,$ & $36=2^2\cdot3^2$ & $2\cdot3^2,\,19$ \\
 &  & $2^2\cdot5,\, 2\cdot3^2$  & $38=2\cdot19$ & $19,\,2\cdot3^2$ \\
 &  &   & $45=3^2\cdot5$ & $3^2,\,19$ \\ \hline
$L_2(16)$ & $4\,080=2^4\cdot3\cdot5\cdot17$ & $2^4\cdot3\cdot5,\,\ldots$ & --- {\em (Prop.~\ref{primes})} & --- \\ \hline
$L_3(3)$ & $5\,616=2^4\cdot3^3\cdot13$ & $2^4\cdot3^3,\,\ldots$ & --- {\em (Prop.~\ref{primes})} & --- \\ \hline
$U_3(3)$ & $6\,048=2^5\cdot3^3\cdot7$ & $2^3\cdot3^3,\, 2^3\cdot3\cdot7,\, 2^5\cdot3$ & --- {\em (Prop.~\ref{mth})} & --- \\ \hline
$L_2(23)$ & $6\,072=2^3\cdot3\cdot11\cdot23$ & $11\cdot23,\, 2^3\cdot3,\, 2\cdot11$ & $44=2^2\cdot11$ & $2\cdot11,\,23$\\
 &  &  & $46=2\cdot23$ & $23,\,2\cdot11$\\
 &  &   & $66=2\cdot3\cdot11$ & $2\cdot11,\,23$\\
 &  &   & $69=3\cdot23$ & $23,\,2\cdot11$\\  \hline
$L_2(25)$ & $7\,800=2^3\cdot3\cdot5^2\cdot13$ & $2^2\cdot3\cdot5^2,\, 2^3\cdot3\cdot5,$ & $39=3\cdot13$ & $13,\,2^2\cdot5^2$\\
 &  & $2\cdot13,\, 2^3\cdot3$  &   &  \\ \hline
$M_{11}$ & $7\,920=2^4\cdot3^2\cdot5\cdot11$ & $2^4\cdot3^2\cdot5,\,\ldots$ & --- {\em (Prop.~\ref{primes})} & --- \\ \hline
$L_2(27)$ & $9\,828=2^2\cdot3^3\cdot7\cdot13$ & $3^3\cdot13,\, 2^2\cdot7,$ & $18=2\cdot3^2$   & $3^2,\,2\cdot13$ \\
 &  & $2\cdot13,\, 2^2\cdot3$ & $21=3\cdot7$   & $7,\,2\cdot13$ \\
 &  &  & $36=2^2\cdot3^2$  & $2^2\cdot3,\,13$ \\
 &  &  & $42=2\cdot3\cdot7$   & $2\cdot7,\,13$ \\
 &  &  & $52=2^2\cdot13$  & $2\cdot13,\,3^3$ \\
 &  &  & $54=2\cdot3^3$  & $3^3,\,2\cdot13$ \\
 &  &  &  $63=3^2\cdot7$ & $3^2,\,2\cdot13$ \\
 &  &  & $78=2\cdot3\cdot13$ & $2\cdot13,\,3^2$ \\
 &  &  & $91=7\cdot13$  & $13,\,3^3$ \\ \hline
\end{tabular}}
\end{tab}

\medskip

We conclude with some remarks on the program that we used for calculations. If the indices of $H$ and $K$ are sufficiently small (as in the case of the group $G=L_2(8)$), then one can find suitable sets $A_0$ and $B_0$ by running over all possible subsets of transversals $S$ and~$Q$. However, it would require too much time in larger groups, so our program runs through the possible subsets $A_0$ of $S$ and then find $B_0$ by solving exact cover problem (see \cite{Knuth}). In fact, we can find certain nontrivial factorizations (but not all of them) for simple groups of order greater that $10\,000$, even for the Mathieu group $G=M_{12}$ of order $95\,040=2^6\cdot3^3\cdot5\cdot11$. As one can check using \cite{Atlas} and Proposition~\ref{mth}, it suffices to exclude $a=135,\, 270,\, 297$ from $\cD(G)$. We verified that $135,297\not\in\cD(G)$, so it would be interesting to exclude the only possible element $270$ from~$\cD(G)$.

\bigskip

\end{document}